\newcommand{\Z}{\mathbb{Z}}
\newcommand{\R}{\mathbb{R}}
\newcommand{\HH}{\mathbb{H}}
\newcommand{\Zcl}{\mathrm{Zcl}}
\newcommand{\PO}{\mathbf{PO}}
\renewcommand{\P}{\mathscr{P}}
\newcommand{\SSS}{\mathbb{S}}
\newcommand{\Sym}{\textrm{Sym}}
\newcommand{\Isom}{\mathrm{Isom}}
\newcommand{\OO}{\mathrm{O}}
\newtheorem{theorem}{Theorem}[section]
\newtheorem{corollary}{Corollary}[theorem]
\newtheorem{lemma}[theorem]{Lemma}
\newtheorem{proposition}[theorem]{Proposition}
\theoremstyle{remark}
\newtheorem{remark}[theorem]{Remark}
\theoremstyle{remark}
\title{Thin hyperbolic reflection groups}
\author{Nikolay Bogachev}
\address{Department of Computer and Mathematical Sciences, University of Toronto Scarborough, 1095 Military Trail, Toronto ON, M1C 1A3 Canada}
\email{n.bogachev@utoronto.ca}
\author{Alexander Kolpakov}
\address{Institut de Math\'ematiques, Universit\'e de Neuch\^atel, Rue Emile--Argand 11, CH--2000 Neuch\^atel, Suisse / Switzerland}
\email{kolpakov.alexander@gmail.com}
\begin{document}

\begin{abstract}
We study a family of Zariski dense finitely generated discrete subgroups of $\Isom(\HH^d)$, $d \geqslant 2$, defined by the following property: any group in this family contains at least one reflection in a hyperplane. As an application we obtain a general description of all thin hyperbolic reflection groups. In particular, we show that the Vinberg algorithm applied to a non--reflective Lorentzian lattice gives rise to an infinite sequence of thin reflection subgroups in $\Isom(\HH^d)$, for any $d \geqslant 2$. Moreover, every such group is a subgroup of a group produced by the Vinberg algorithm applied to a Lorentzian lattice independently on the latter being reflective. As a consequence, all thin hyperbolic reflection groups are enumerable. 
\end{abstract}

\subjclass[2020]{}

\keywords{Thin group, arithmetic lattice, hyperbolic space, Lobachevsky space, reflection group, Lorentzian lattice, Vinberg algorithm}

\maketitle

\section{Introduction}

Let $\HH^d$ be the real hyperbolic space of dimension $d$. Then its full isometry group $\Isom(\HH^d)$ is naturally embedded in $\OO_{d,1}(\R)$ as a subgroup of index $2$. Here and below, by a {\em (hyperbolic) reflection group} we mean a discrete subgroup of $\Isom(\HH^d)$ generated by reflections in hyperplanes. 

In this note, we provide a characterization of Zariski dense finitely generated discrete subgroups of $\Isom(\HH^d)$ for $d \geqslant 2$ that contain reflections in hyperplanes; see Theorem~\ref{th:main}. Out of curiosity, we provide two different proofs, one of an algebraic nature and the other being geometric. Moreover, Theorem~\ref{th:main} is apllied in order to describe {\em thin} reflection subgroups of arithmetic hyperbolic lattices; see Theorem~\ref{th:thin} and Theorem~\ref{th:thin-2}.

\begin{theorem}\label{th:main}
    Let $\Gamma < \Isom(\HH^d)$ be a finitely generated Zariski dense discrete group containing at least one reflection. Then $\Gamma$ contains a discrete Zariski dense subgroup generated by finitely many reflections. Moreover, if the maximal reflection subgroup of $\Gamma$ is infinite--index then $\Gamma$ contains a sequence of subgroups $W_n$ all being Zariski dense finitely generated hyperbolic reflection groups such that $W_n$ is of infinite index in $W_{n+1}$ for all $n$. 
\end{theorem}

\begin{remark}\label{rem-1}
Despite of having a general algebraic proof of Theorem~\ref{th:main}, we also provide a geometric justification for this result. This geometric proof relies on hyperbolic geometry, and can be useful for the reader to get deeper understanding of geometric behaviour of Zariski dense hyperbolic reflection groups. We would like to stress the fact that the algebraic proof can be potentially generalized to the more general setting of subgroups of $\OO_{p,q}(\R)$; see Remark \ref{rem:vin-generalization} for more details. 
\end{remark}

The discussion of Zariski dense finitely generated groups is interesting for us in particular in the context of the so--called {\em thin} groups. 

Let $G$ be a semi--simple Lie group and $\Gamma < G$ be its arithmetic subgroup with ambient algebraic group $\mathbf{G}$ and field of definition $k$. Assume for simplicity that $\Gamma$ is a finite--index subgroup of $\mathbf{G}(\mathcal{O}_k)$. A group $\Lambda < \Gamma$ is called \textit{thin} if
\begin{itemize}
    \item[a)] $\Lambda$ is finitely generated,
    \item[b)] $\Lambda$ has infinite index in $\Gamma$, i.e. $\mu(G/\Lambda) = \infty$ w.r.t. the Haar measure $\mu$ on $G$,
    \item[c)] $\Lambda$ is Zariski dense in $\mathbf{G}(\R)$, which is isogenous to $G^\circ$.
\end{itemize}

Recently thin groups, usually considered as subgroups of $\mathrm{SL}_n(\Z)$, with $n\geq 2$, attracted attention in regard to their number--theoretic and dynamical properties, as well as because of their relation to the monodromy of hypergeometric functions. The reader may consult \cite{KLLR19, Sarnak} for more information on thin groups. 

Let $k$ be a totally real number field with the ring of integers $\mathcal{O}_k$. Let $L$ be a {\em Lorentzian lattice} of signature $(d,1)$, see Section~\ref{subsec:Lor} for precise definitions. Then the hyperbolic space $\HH^d$ is naturally embedded in $L \otimes_{\mathrm{id}(\mathcal{O}_k)}\mathbb{R} \cong \R^{d,1}$. Let $\mathcal{O}'(L)$ be the group of linear transformations with $\mathcal{O}_k$--coefficients that preserve both $L$ and $\HH^d$.

It is known that $\Gamma = \mathbf{PO}(L)_{\mathcal{O}_k} \cong \mathcal{O}'(L)$ is an arithmetic lattice in the Lie group $G = \PO(L)_\R \cong \Isom(\HH^d)$. Thus, the action of $\Gamma$ on $\HH^d$ has a finite--volume fundamental domain. Lattices in $G$ commensurable with $\Gamma$ as constructed above are called \textit{arithmetic hyperbolic lattices of simplest type}. This is the only type of arithmetic groups in $\Isom\,\HH^d$ that may contain reflections in hyperplanes: see Vinberg \cite[Lemma 7]{Vin67}. 

Given a Lorentzian lattice $L$, let $\mathscr{R}(L)$ be the subgroup of $\mathcal{O}'(L)$ generated by all reflections. In other words, $\mathscr{R}(L)$ is the maximal reflection subgroup of $\mathcal{O}'(L)$. If $\mathscr{R}(L)$ has finite index in $\mathcal{O}'(L)$, then $L$ is called \textit{reflective}. As follows fromVinberg's lemma (see Lemma~\ref{lemma:Vinberg}), a Lorentzian lattice $L$ is reflective if and only if the fundamental Coxeter polyhedron for $\mathscr{R}(L)$ has finite volume in $\HH^d$.  

In 1972, Vinberg \cite{Vin72} suggested an algorithm allowing, at least theoretically, to construct subsequently the walls of a fundamental Coxeter polyhedron for any hyperbolic reflection group. In practice this algorithm is especially convenient if applied to a subgroup of type $\mathscr{R}(L)$, for some Lorentzian lattice $L$. Such convenience stems from the opportunity to enumerate all the reflections in  $\mathscr{R}(L)$ for a given lattice $L$; see Section~\ref{subsec:Lor} for details.

Let us turn to {\em thin hyperbolic reflection groups}. In \cite{Sarnak}, Sarnak provides an example of an infinite index {\em infinitely generated} thin reflection group produced by applying the Vinberg algorithm to a non--reflective Lorentzian lattice. Much more generally, we show that \textit{all} thin finitely generated reflection groups are, in a sense, coming from a similar construction.

It is known that if $L$ is not reflective but $\mathscr{R}(L)$ contains non--trivial reflections then $\mathscr{R}(L)$ has the fundamental Coxeter polyhedron $\mathscr{P}$ with infinitely many facets \cite{Vin72,Bottinelli}. Let $\mathscr{R}_m(L)$ be the reflection subgroup of $\mathscr{R}(L)$ generated by the first $m$ reflections produced by the Vinberg algorithm \cite{Vin72} and $\mathscr{P}_m$ be the respective Coxeter polyhedron.

The following immediately follows from Theorem~\ref{th:main}.

\begin{theorem}\label{th:thin}
Let $L$ be a non--reflective Lorentzian lattice over a totally real number field~$k$, and assume that $\mathscr{R}(L)$ is non--trivial. Then for sufficiently large $m$ all subgroups $\mathscr{R}_m(L)$ are thin in $\mathbf{PO}(L)_{\mathcal{O}_k}$. Moreover, they form a chain of subgroups where each inclusion has infinite index.  
\end{theorem}

\noindent Furthermore, we would like to stress the fact that the group $\mathscr{R}(L)$ in Theorem~\ref{th:thin} is non--trivial: there exists a Lorentzian lattice without any reflections in its isometry group at all (see Section~\ref{non-ref2}). 

It is also worth mentioning that recently Douba \cite{D22} showed that an irreducible right--angled Coxeter group with $n \geqslant 3$ generators embeds as a thin subgroup of a uniform arithmetic lattice in an indefinite orthogonal group $\mathrm{O}_{p,q}(\R)$ for some $p, q \ge 1$ satisfying $p + q = n$. Although such representations are largely out of scope in the context of discrete subgroups of $\Isom\,\HH^d$, they are related to our discussion in Remark~\ref{rem-1} and Remark~\ref{rem:vin-generalization}.

The following is a very rough (and elementary) converse of Theorem~\ref{th:thin}. 

\begin{theorem}\label{th:thin-2}
Every thin hyperbolic reflection group~$\Lambda < \Isom(\HH^d)$ is a subgroup of $\mathscr{R}_m(L) < \mathbf{PO}(L)_{\mathcal{O}_k}$, for some Lorentzian $\mathcal{O}_k$--lattice $L$ and natural ~$m \geqslant d+1$.
\end{theorem}

\noindent The proof of Theorem \ref{th:thin-2} follows in Section~\ref{sec:proofmain2}. 

\begin{remark}\label{compute}
Provided $L$ is any Lorentzian lattice (reflective and non--reflective alike), we obtain a thin subgroup $\Lambda$ of $\Gamma = \mathbf{PO}(L)_{\mathcal{O}_k}$ whenever the Vinberg algorithm produces a set of roots having connected Coxeter diagram that is neither elliptic nor affine, and does not correspond to a finite--volume Coxeter polyhedron (the latter can be checked by using Vinberg's criterion for hyperbolic Coxeter polyhedra \cite[Theorem 1]{Vin67}).
\end{remark}

\begin{remark}
    Based on Remark~\ref{compute}, we deduce that all thin reflection subgroups of $\Gamma = \mathbf{PO}(L)_{\mathcal{O}_k}$ are enumerable, given the existence of an algorithmic procedure to enumerate their generators. More precisely, there exists, at least in theory, an opportunity to list all Lorentzian lattices and subsequently apply our techniques to them. However, whether a given finitely generated subgroup $\Lambda < \Gamma$ can be identified as thin does not follow immediately from this procedure, as we still need to decide group isomorphism. On the other hand, if $\Lambda$ is given by its reflection generating set, then we will be able to detect it in finite time.

    It should be noted that thin groups are sometimes characterized as finitely generated Zariski dense infinite--index subgroups of lattices, regardless  of the latter being arithmetic or not. In this broader setting, the ambient lattices can have more complicated structures, precluding a precise Lorentzian lattice--based description as found in the arithmetic case. In particular, the lack of a comprehensive classification of non--arithmetic hyperbolic lattices complicates the prospect of even theoretically enumerating all thin groups as defined in this broader context.
\end{remark}

\subsection*{Funding}
N.B. was supported by the Russian Science Foundation, grant no. 21--41--09011.
A.K. was supported by the Swiss National Science Foundation, project PP00P2--202667.

\subsection*{Acknowledgements}

We thank Daniel Allcock, Ga\"el Collinet, and Sami Douba for numerous discussions and comments that helped improving this paper.

\section{Preliminaries}\label{sec:prelim}

\subsection{Lorentzian lattices and hyperbolic reflection groups}\label{subsec:Lor}

A {\em hyperbolic reflection group} is a discrete subgroup of $\Isom\,\HH^d$ generated by reflections in hyperplanes. Such groups are of some interest because of possibility to work explicitly with their fundamental domains, hyperbolic Coxeter polyhedra, i.e. hyperbolic convex polyhedra with dihedral angles being of the form $\pi/m$ for some $m \ge 2$. A good source of examples of hyperbolic reflection groups can be obtained from the class of arithmetic hyperbolic lattices of simplest type associated to the so--called Lorentzian lattices, and here we briefly describe this construction. 

Let $k$ be a totally real number field with the ring of integers $\mathcal{O}_k$. Then a freely generated $\mathcal{O}_k$--module $L$ equipped with a scalar product $(\cdot, \cdot)$ of signature $(d,1)$ is called a \textit{Lorentzian lattice} if for any non--identity embedding $\sigma: k \to \mathbb{R}$ the quadratic space $L\otimes_{\sigma(\mathcal{O}_k)}\mathbb{R}$ is positive definite.

Let us identify the space $L \otimes_{\mathcal{O}_k}\mathbb{R}$ with the real Minkowsky space $\mathbb{R}^{d,1}$, and let $\mathcal{O}'(L)$ be the group of linear transformations with $\mathcal{O}_k$--coefficients that preserve $L$ and map each connected component of the cone $\mathfrak{C} = \mathfrak{C}^+ \cup \mathfrak{C}^- = \{v \in \mathbb{R}^{d,1} \,|\, (v,v) < 0\}$ to itself. Then $\mathcal{O}'(L)$ is a discrete group of isometries of the hyperbolic $d$--space $\mathbb{H}^d = \{ v \in \mathbb{R}^{d,1} \cap \mathfrak{C}^+ \,|\, (v,v) = -1 \}$. 

It is known that $\Gamma = \mathbf{PO}(L)_{\mathcal{O}_k} \cong \mathcal{O}'(L)$ is an arithmetic lattice in $G = \PO(L)_\R \cong \Isom\,\HH^d$. Thus, the action of $\Gamma$ on $\HH^d$ has a finite--volume fundamental domain. Lattices in $G$ commensurable with $\Gamma$ as constructed above are called \textit{arithmetic hyperbolic lattices of simplest type}. This is the only type of arithmetic groups in $\Isom\,\HH^d$ containing reflections in hyperplanes. 

A primitive vector $e \in L$ is called a \textit{root} if $2 (e,x) \in (e,e) \cdot \mathcal{O}_k$ for all $x \in L$. Every root defines the associated \textit{reflection} $r_e(x) = x - \frac{2 (e,x)}{(e,e)} e$ acting on $L\otimes_{\mathcal{O}_k} \mathbb{R}$ that preserves the lattice $L$. The reflection $r_e$ induces a reflection in hyperplane $H_e = \{x \in \HH^d \mid (x,e) = 0\}$. Let $\mathscr{R}(L)$ be the subgroup of $\mathcal{O}'(L)$ generated by all such reflections. If $\mathscr{R}(L)$ has finite index in $\mathcal{O}'(L)$, then $L$ is called \textit{reflective}. 

The following fundamental fact \cite[Prop. 3]{Vin72} will be referred to as \textit{Vinberg's lemma}.

\begin{lemma}[\`{E}. B. Vinberg, \cite{Vin72}]\label{lemma:Vinberg}
Let $\Gamma$ be a discrete subgroup of $\Isom\, \HH^d$ and $W$ be its maximal reflection subgroup (i.e. generated by all reflections in $\Gamma$). Then $\Gamma$ decomposes into the semidirect product
$$
\Gamma = W \rtimes \mathrm{Sym}_\Gamma(\mathscr{P}),
$$
where $\mathscr{P}$ is the fundamental Coxeter polyhedron of $W$, and $\mathrm{Sym}_\Gamma(\mathscr{P}) < \Gamma$ is a subgroup of its symmetries belonging to $\Gamma$.  
\end{lemma}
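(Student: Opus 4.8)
The plan is to deduce the decomposition from Vinberg's structure theory for discrete reflection groups applied to $\mathcal{R}$, together with the observation that $\Gamma$ normalizes $\mathcal{R}$ and hence permutes the chambers of $\mathcal{R}$. First I would record the easy structural facts. Conjugating a reflection $r_H$ in a hyperplane $H$ by an isometry $g$ yields the reflection $r_{gH}$ in $gH$; thus the set of reflections in $\Gamma$ is conjugation-invariant, so $\mathcal{R}$ is normal in $\Gamma$. Discreteness of $\mathcal{R}$ forces the family $\mathcal{M}$ of mirror hyperplanes of reflections in $\mathcal{R}$ to be locally finite in $\HH^d$, so $\HH^d \setminus \bigcup_{H \in \mathcal{M}} H$ decomposes into open convex polyhedra, the chambers of $\mathcal{R}$. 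At this point I invoke Vinberg's theorem from \cite{Vin72}: the closure $\mathcal{P}$ of any one chamber is a convex Coxeter polyhedron (each dihedral angle of the form $\pi/m$ with $m \in \Z_{\ge 2}$, the remaining pairs of bounding hyperplanes being disjoint, parallel, or ultraparallel), the reflections in the facets of $\mathcal{P}$ generate $\mathcal{R}$, and $\mathcal{R}$ acts simply transitively on the set of chambers, with $\mathcal{P}$ an exact fundamental domain; in particular the stabilizer of $\mathcal{P}$ in $\mathcal{R}$ is trivial.

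Next I would run the chamber-stabilizer argument. Since $g\mathcal{M} = \mathcal{M}$ for every $g \in \Gamma$ (again by the conjugation formula), $\Gamma$ permutes the chambers of $\mathcal{R}$, so $g\mathcal{P}$ is a closed chamber for each $g$. Put $\mathrm{Sym}(\mathcal{P}) := \Stab_\Gamma(\mathcal{P})$, whose elements are exactly the isometries in $\Gamma$ preserving the polyhedron $\mathcal{P}$, i.e. the symmetries of $\mathcal{P}$ lying in $\Gamma$. Given $g \in \Gamma$, simple transitivity of $\mathcal{R}$ on chambers provides $\rho \in \mathcal{R}$ with $\rho g \mathcal{P} = \mathcal{P}$, whence $g \in \mathcal{R}\cdot\mathrm{Sym}(\mathcal{P})$; and if $g \in \mathcal{R} \cap \mathrm{Sym}(\mathcal{P})$, then $g \in \mathcal{R}$ stabilizes $\mathcal{P}$, so $g = \mathrm{id}$. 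Combined with the normality of $\mathcal{R}$, this yields $\Gamma = \mathcal{R} \rtimes \mathrm{Sym}(\mathcal{P})$. In the degenerate case $\mathcal{R} = \{\mathrm{id}\}$ one simply takes $\mathcal{P} = \HH^d$ and $\mathrm{Sym}(\mathcal{P}) = \Gamma$.

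The routine ingredients are the normality of $\mathcal{R}$, the permutation action on chambers, and the final two-line coset computation. The hard part, which I would not reprove but cite from \cite{Vin72}, is Vinberg's structure theorem itself: that a chamber of the mirror arrangement is an exact fundamental domain on which $\mathcal{R}$ acts simply transitively, that its facet reflections generate $\mathcal{R}$, and that it is acute-angled. This is the genuinely substantive input, relying on Tits--Vinberg gallery arguments together with the dihedral-angle computation that forces angles of the form $\pi/m$; everything else in the proof is bookkeeping built on top of it.
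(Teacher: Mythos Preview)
Your argument is correct and is the standard proof of Vinberg's lemma. Note, however, that the paper does not actually prove this statement: it is quoted as \cite[Prop.~3]{Vin72} and invoked as a black box throughout. So there is no ``paper's own proof'' to compare against; you have supplied the argument that the authors chose to cite rather than reproduce. Your sketch correctly isolates the one substantive input (simple transitivity of $\mathcal{R}$ on chambers and the Coxeter structure of $\mathcal{P}$, which you rightly attribute to \cite{Vin72}) and reduces everything else to the routine normality-plus-stabilizer computation.
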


Thus, by applying the above Lemma to $\Gamma = \mathbf{PO}(L)_{\mathcal{O}_k}$, a Lorentzian lattice $L$ is reflective if and only if the fundamental Coxeter polyhedron for $\mathscr{R}(L)$ has finite volume in $\HH^d$.

In 1972, Vinberg suggested \cite{Vin72} an algorithm allowing one, at least theoretically, to construct subsequently the walls of a fundamental Coxeter polyhedron for any hyperbolic reflection group. In practice this algorithm is especially convenient if applied to a subgroup of type $\mathscr{R}(L)$ for some Lorentzian lattice $L$. This convenience is simply explained by the opportunity to enumerate all the roots of a given lattice~$L$.

\subsection{Zariski dense discrete subgroups of $\Isom(\HH^d)$}

The following lemma is a well known characterization of discrete Zariski dense subgroups of $\Isom\,\HH^d$.

\begin{lemma}\label{lemma:Zariski-discrete}
    Let $\Gamma < \Isom\,\HH^d$ be a discrete group. It is Zariski dense if and only if its limit set $\Lambda_\Gamma$ is not contained in a hypersphere of $\partial \HH^d = \SSS^{d-1}$.
\end{lemma}
\begin{proof}
    Let $H = \mathrm{Zcl}(\Gamma)$ be the Zariski closure of $\Gamma$ in $\Isom\,\HH^d$. As follows from \cite[Part I, Ch. 5, Th. 3.7]{AVS93}, its connected component $H^\circ$ is either 
    \begin{itemize}
        \item a product $H_Y \times K_Y$ where $H_Y$ is a group of proper (orientation preserving) loxodromic elements with axes belonging to some plane $Y$ (with $0 \le \dim Y \le d$) and $K_Y$ is a compact Lie group consisting of all proper finite order elements fixing the plane $Y$; or
        \item some subgroup $H_p$ of the stabilizer (in $\Isom^+\,\HH^d$) of an ideal point $p \in \SSS^{d-1} = \partial \HH^d$.
    \end{itemize}
    Thus, if $\Gamma$ is Zariski dense, then $H^\circ$ can not be neither $H_p$ nor $H_Y \times K_Y$ where $Y$ is contained in some hyperplane. This implies that $\Lambda_\Gamma$ cannot be contained in any hypersphere of $\SSS^{d-1}$.

    Conversely, if $\Lambda_\Gamma$ is contained in a hypersphere $S = U \cap \SSS^{d-1}$ with $U$ being a hyperplane, then we conclude that $H^\circ$ is of the form $H_Y \times K_Y$ with $Y \subset U$, or $H^\circ = H_p$ with $p \in S$.
\end{proof}

The following is a geometric reformulation of the previous lemma applied to hyperbolic reflection groups.

\begin{lemma}\label{lemma:Zariski-reflection}
    Let $W$ be a finitely generated hyperbolic reflection groups. Suppose that $\P$ is its fundamental Coxeter polyhedron. Then $W$ is Zariski dense if and only if all of the following conditions hold:
    \begin{itemize}
        \item $W$ is not a spherical Coxeter group, i.e. $\P$ is not a fundamental cone with a finite vertex;
        \item $W$ is not a parabolic Coxeter group, i.e. $\P$ is not a fundamental cone with an ideal vertex;
        \item there is no hyperplane $U$ orthogonal to all of the walls of $\P$, i.e. $\P$ is non--degenerate.
    \end{itemize}
\end{lemma}
\begin{remark}
    As follows from \cite[Theorem 7]{Cor} (see \cite{BekH00, BenH04, Vin71} for the results used therein), every non--affine infinite irreducible Coxeter group $W$ is Zariski dense in the real algebraic group $\mathbf{O}(f)$, where $f$ is the quadratic form corresponding to the Tits representation. We would also conjecture that the same holds for all irreducible non--affine Tits--Vinberg representations as described in \cite{Vin71}.
\end{remark}

The next proposition was apparently known to Vinberg and other experts, however we did not manage to find it in the existing literature. A slightly less general case is written down in the PhD thesis of Bottinelli, see \cite[Lemma 7.1.2]{Bottinelli}.

\begin{proposition}\label{prop:inf-index-then-inf-sided}
    Let $\Gamma < \Isom(\HH^d)$ be a Zariski dense finitely generated discrete group containing at least one reflection, and let $W$ be the maximal reflection subgroup of $\Gamma$.
    Then $W$ is Zariski dense. Moreover, $W$ is finitely generated if and only if it is of finite index in $\Gamma$.
\end{proposition}
\begin{proof}
    First of all, recall that $W \trianglelefteq \Gamma$ and therefore by the well--known property of limit sets we have that $\Lambda_W = \Lambda_\Gamma$. Thus, by Lemma~\ref{lemma:Zariski-discrete}, the group $W$ is also Zariski dense.

    If $W$ is of infinite index in $\Gamma$, then by Vinberg's lemma~\ref{lemma:Vinberg} the symmetry group $\Sym_{\Gamma}(\P)$ is infinite. Now we use the fact that $W$ is Zariski dense. By Lemma \ref{lemma:Zariski-reflection}, all normal vectors of $\P$ span the entire Minkowski space $\R^{d,1}$. Obviously, $W$ is not a lattice, since it is of infinite index in another discrete group, and thus $\P$ is not a simplex and therefore has at least two disjoint facets. This implies that $\P$ is not finitely sided, since otherwise $\Sym_\Gamma (\P)$ would have a fixed point in $\HH^d$ (this fixed point would be the center of mass of all the midpoints of geodesic segments joining disjoint facets). The latter means $\Sym_\Gamma (\P)$ would be finite, a contradiction. That is, we just proved that $\P$ is infinite--sided, hence $W$ is infinitely generated.

    The converse is also true: a finite--index subgroup of a finitely generated group is finitely generated as a consequence of the Reidemeister--Schreier algorithm.
\end{proof}

\section{Algebraic proof of Theorem~\ref{th:main}}
\label{sec:alg-proof}

\begin{lemma}
    Let $G$ be an algebraic group, $B$ its subgroup, and $A \trianglelefteq B$. Then $B$ normalizes the Zariski closure $\Zcl(A)$ of $A$, and then $\Zcl(B)$ normalizes $\Zcl(A)$.
\end{lemma}
\begin{proof}
    The first claim is that the group $B$  normalizes the Zariski closure $\Zcl(A)$. Let $\Zcl(A) = C$. Consider the set $ D = \{ g \in G \mid b g b^{-1} \in C \textnormal{ for all } b \in B \}$. Then, $D$ is an algebraic subset in $G$ containing $A$, and thus it contains $\Zcl(A)$. By construction, $D$ is such a set that is conjugated by $B$ into $\Zcl(A)$, and therefore the first claim is proved.

    For the second claim, recall that the normalizer $N(D)$ of the algebraic set $D$ is an algebraic set itself. We just proved that $N(D) > B$ and thus $N(D) > \Zcl(B)$.
\end{proof}

\begin{corollary}\label{coro:normal-dense}
    If $B$ is Zariski dense in $G$, $A \trianglelefteq B$, then $\Zcl(A) \trianglelefteq G$.
\end{corollary}

The above corollary gives us another justification of the Zariski density of maximal reflection subgroups in Zariski dense discrete groups containing reflections.

\begin{corollary}\label{coro:W-dense}
    Let $\Gamma < \Isom(\HH^d)$ be a finitely generated Zariski dense discrete group containing at least one reflection. Then the maximal reflection subgroup $W < \Gamma$ is also Zariski dense.
\end{corollary}
\begin{proof}
    Since $W \triangleleft \Gamma$, then  Corollary~\ref{coro:normal-dense} and the fact that  $\OO_{d,1}(\R)$ is a simple real Lie group, imply that $W$ is Zariski dense.
\end{proof}

\begin{proof}[Algebraic proof of Theorem~\ref{th:main}]
    If $\Gamma < \Isom(\HH^d)$ is a finitely generated Zariski dense discrete group containing at least one reflection, then the maximal reflection subgroup $W$ of $\Gamma$, that is, generated by all reflections in $\Gamma$, is also Zariski dense in $\Isom(\HH^d)$ (see Proposition~\ref{prop:inf-index-then-inf-sided} or Corollary~\ref{coro:W-dense}). Moreover, Proposition~\ref{prop:inf-index-then-inf-sided} implies that if $W$ is of finite index in $\Gamma$, then it is finitely generated.
    
   Suppose now that $W$ has infinite index in $\Gamma$. Then $\Gamma$ contains a sequence of groups $W_n$ generated by first $n$ reflections assuming that we somehow enumerated the generators $\{r_n\}_{n=1}^{\infty}$ of $W$. For example, these first $n$ reflections may be produced by the Vinberg algorithm \cite{Vin72} applied to the group $W$.
   We claim that $W_n$ are eventually Zariski dense (finitely generated by construction) Coxeter groups such that $W_n$ is of infinite index in $W_{n+1}$ for every $n$. 

   Let us consider the tower of inclusions 
   $$
   \Zcl(W_1) < ... < \Zcl(W_n) < \Zcl(W_{n+1}) < ...
   $$
   As a sequence of algebraic groups, it stabilizes (one may consider the corresponding sequence of their defining ideals and recall that polynomial rings are Noetherian), meaning that for some $N > d$ we have $\Zcl(W_n)$, $n > N$, all equal to the same algebraic group. Moreover, the latter is exactly the Zariski closure $\Zcl(W)$ since every $r_n$ is included at some point in the above considered tower of inclusions. That $W_m$ is of infinite index in $W_{n}$ for any $m < n$, easily follows from \cite[Theorem 1.2]{felikson-tumarkin}: a finite index inclusion would compel the number of reflection generators of $W_m$ to be at least $n$.
\end{proof}

\begin{remark}\label{rem:vin-generalization}
    Our algebraic approach relies heavily on some facts specific to hyperbolic geometry: most notably, we use Vinberg's Lemma (Lemma~\ref{lemma:Vinberg}) and Proposition~\ref{prop:inf-index-then-inf-sided}, with Vinberg's lemma being particularly crucial. Should Vinberg's lemma be applicable to $\OO_{p,q}(\R)$ or $\mathrm{PU}_{n,1}(\R)$ contexts, it is likely that our main result (Theorem~\ref{th:main}) could be extended to a more general setting.

    More precisely, Zariski density of $W$ is demonstrated algebraically in a broader context, using only the normality of $W$ and the simplicity of the ambient Lie group, as shown in Corollary~\ref{coro:W-dense}. The criterion for $W$ being finitely generated (see Proposition~\ref{prop:inf-index-then-inf-sided}), depending on the finiteness of $W$'s index in $\Gamma$, hinges on both Vinberg's Lemma (Lemma~\ref{lemma:Vinberg}) and Lemma~\ref{lemma:Zariski-reflection}. The latter is a classical reinterpretation of Zariski density, which might also hold true in a more general setting.
\end{remark}

\section{Geometric proof of Theorem~\ref{th:main}}\label{sec:proofmain}

The starting point of this proof is the same as in Section \ref{sec:alg-proof}. If $\Gamma < \Isom\,\HH^d$ is a discrete Zariski dense group containing at least one reflection, then it contains the maximal (and non--trivial) reflection subgroup $W$. Moreover, by Vinberg's Lemma~\ref{lemma:Vinberg} we have that $\Gamma = W \rtimes \Sym_\Gamma(\P)$ where $\P$ is the fundamental Coxeter polyhedron for $W$. As we already know, $W$ is Zariski dense by Proposition~\ref{prop:inf-index-then-inf-sided}. The latter also implies that if $W$ has finite index in $\Gamma$ then $W$ is itself finitely generated.

From now on we assume that $W$ is an infinite--index subgroup in $\Gamma$. Then $\Sym_\Gamma(\P)$ is infinite and $\P$ has infinitely many facets, thus $W$ is infinitely generated by Proposition~\ref{prop:inf-index-then-inf-sided}. As in Section~\ref{sec:alg-proof}, let us denote by $W_n$ the reflection subgroup of $W$ generated by the first $n$ reflections produced by the Vinberg algorithm \cite{Vin72} applied to the group $W$, and let $\P_n$ be the fundamental Coxeter polyhedron of $W_n$.

First of all, let us observe that for $m \geqslant 2d-1$ the group $W_m$ is neither affine nor finite. Indeed, the only finite groups being simplex reflection groups with $d+1$ generators, while the maximum number of generators of an affine Coxeter group in $\HH^d$ is attained by a $(d-1)$--cube with $2d-2$ facets. The rest of the proof follows from Lemma~\ref{lemma:Zariski-reflection}: namely, it only remains to show that the Coxeter polyhedra $\P_n$ are eventually non--degenerate (that is, there is no hyperplane orthogonal to all the walls of $\P$).

\begin{lemma}\label{irreducible}
Only finitely many terms of the sequence $\{\P_n\}^\infty_{n=1}$ are degenerate Coxeter polyhedra.
\end{lemma}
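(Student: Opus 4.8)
The plan is to recast reducibility of $\mathcal{R}_m(L)$ as disconnectedness of the Coxeter diagram $\Sigma_m$ on the Vinberg normals $e_1,\dots,e_m$ — these satisfy $(e_i,e_j)\le 0$ for $i\ne j$, so $\mathcal{R}_m(L)=\langle r_{e_1},\dots,r_{e_m}\rangle$ is a Coxeter group with diagram $\Sigma_m$, reducible exactly when $\Sigma_m$ splits — and then to show $\Sigma_m$ is disconnected for only finitely many $m$. First I would record the signature constraint: a partition $\{e_1,\dots,e_m\}=A\sqcup B$ into nonempty sets with $(e_i,e_j)=0$ for $e_i\in A$, $e_j\in B$ gives $\langle A\rangle\perp\langle B\rangle$ in $\mathbb{R}^{d,1}$, and two orthogonal subspaces of a $(d,1)$-space cannot both contain a non-spacelike vector; hence one of them, say $\langle A\rangle$, is positive definite, the roots in $A$ are then linearly independent with $|A|=\dim\langle A\rangle\le d$, and $\langle r_e : e\in A\rangle$ is finite. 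Since by Lemma~\ref{non-affine} $\mathcal{R}_m(L)$ is infinite and non-affine once $m\ge 2d+1$, a disconnected $\Sigma_m$ with $m$ large has exactly one component of indefinite span, the rest being elliptic of total size $\le d$.

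Next I would exploit monotonicity: passing from $\Sigma_m$ to $\Sigma_{m+1}$ only adjoins the node $e_{m+1}$ with some incident edges and destroys no old edge, so the indefinite-span component of $\Sigma_m$ lies in a component of $\Sigma_{m+1}$ whose span still contains it, hence is indefinite — i.e.\ in the indefinite-span component of $\Sigma_{m+1}$ — and the number of components of $\Sigma_m$ can only decrease, and only when $e_{m+1}$ links two old components. The key point is that such a link must eventually occur at \emph{every} step: writing $V\subseteq\mathbb{R}^{d,1}$ for the span of all roots of $L$ — equivalently, of all facet normals $e_1,e_2,\dots$ — some finite segment $e_1,\dots,e_N$ spans $V$, and for $m\ge N$ the normal $e_{m+1}\in V$ cannot be orthogonal to all of $e_1,\dots,e_m$, for otherwise $e_{m+1}\in V\cap V^{\perp}$, a totally isotropic subspace that contains no spacelike vector. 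Thus for $m\ge N$ the component count of $\Sigma_m$ is non-increasing, hence eventually constant, say $c$ for all $m\ge M$; and for such $m$ each new node joins exactly one of the $c$ persisting components, which forces the Coxeter diagram $\Sigma_\infty$ of $\mathcal{R}(L)$ itself to have exactly $c$ components.

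It remains to show $c=1$, i.e.\ that $\mathcal{R}(L)$ is irreducible, and I expect this to be the one genuine obstacle. If $\mathcal{R}(L)=W_1\times W_2$ with $W_1$ a nontrivial finite factor and $V_1$ the positive-definite span of its roots, then each facet normal of $\mathcal{P}$ lies in $V_1$ or $V_1^{\perp}$ and $\mathcal{R}(L)$ preserves $V_1\oplus V_1^{\perp}$, so — every root of $L$ being, up to scaling, in the $\mathcal{R}(L)$-orbit of a facet normal — every root of $L$ lies in $V_1$ or in $V_1^{\perp}$; hence $L$ would be a finite-index overlattice of an orthogonal sum $L_1\perp L_2$ with $L_1$ positive definite carrying the ``$W_1$-roots'', which must be ruled out. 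This cannot be bypassed and is sharp: a \emph{permanent} finite direct factor of $\mathcal{R}(L)$ produces a root orthogonal to every other root of $L$, whose mirror the Vinberg algorithm outputs at a finite stage as an isolated node, making $\Sigma_m$ disconnected for all larger $m$. So the heart of the matter is the arithmetic fact that a root of a Lorentzian $\mathcal{O}_k$-lattice is never orthogonal to all the other roots, which I would prove directly from the relations $2(e,x)\in(e,e)\mathcal{O}_k$, producing from a single root $f$ suitable ``diagonal'' roots that mix $f$ with roots of $L\cap f^{\perp}$, rather than through the geometry of the diagram.
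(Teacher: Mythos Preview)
Your combinatorial reduction --- recasting reducibility as disconnectedness of $\Sigma_m$, using the signature to isolate one indefinite component with the rest elliptic of bounded size, and exploiting monotonicity to show the component count stabilises at some $c$ --- is correct and in fact tidier than the paper's parallel passage. Both arguments genuinely reduce the lemma to showing that $\mathcal{R}(L)$ itself is irreducible (your $c=1$), and this is exactly where your proposal and the paper part ways. The paper does not attempt anything arithmetic with the root condition. It invokes Vinberg's lemma $\Gamma=\mathcal{R}(L)\rtimes\mathrm{Sym}(\mathcal{P})$ together with the fact that $\Gamma=\mathbf{PO}(L)_{\mathcal{O}_k}$ has finite covolume, hence limit set all of $\partial\HH^d$: if $\mathcal{R}(L)=H\times S$ with $S$ the definite factor, then $\mathrm{Sym}(\mathcal{P})$ cannot carry roots of the hyperbolic block to the definite one, so all of $\Gamma$ preserves the span of $H$, forcing $\Lambda_\Gamma$ into a proper sub-sphere of $\partial\HH^d$ --- a contradiction. (The same limit-set device is what the paper uses, earlier in its proof, to show that the normals eventually span all of $\mathbb{R}^{d,1}$.)

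Your proposed alternative for this step has a real gap. First, the formulation slips: a finite direct factor $W_1$ with more than one simple root does \emph{not} yield ``a root orthogonal to every other root of $L$'' --- its simple roots are orthogonal to the $W_2$-roots but not to one another --- so the ``arithmetic fact'' you isolate would only exclude rank-one direct factors. What is actually needed is that the root system of $L$ admits no nontrivial orthogonal bipartition. Second, and more seriously, the ``diagonal root'' construction is asserted but not carried out, and there is no mechanism guaranteeing it: for orthogonal roots $f,g$ a combination $\alpha f+\beta g$ is typically not a root (already $h=f+g$ forces $(f,f)+(g,g)$ to divide $2(f,f)$, by testing the crystallographic condition at $x=f$), and the paper's own example of a Lorentzian lattice with no roots whatsoever shows that roots cannot simply be manufactured from the relations $2(e,x)\in(e,e)\,\mathcal{O}_k$. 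Without the finite-covolume/limit-set input, nothing in your outline forces a mixed root to exist, so the argument as written does not close.
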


\begin{proof}
We first claim that there are infinitely many non--degenerate Coxeter polyhedra among $\P_n$.

Suppose the contrary, and thus for some $m \geqslant 2d-1$ we have that every polyhedron $\P_n$, $n \geqslant m$ admits a hyperplane $U_n$ orthogonal to all of the facets of $\P_n$. 

Observe that adding the hyperplane $U_n$ to the facets of $\P_n$ gives a new non--trivial polyhedron $\widetilde{\P_n}$ in $\HH^d$, and its Gram matrix $G(\widetilde{\P_n})$ has signature\footnote{The {\em signature} of a real symmetric matrix $A$ is the triple $(p,q,r)$ of numbers of positive, negative, and zero eigenvalues of $A$, respectively.} $(d_n, 1, n-d_n-1)$, with $n \geqslant m > d_n+1$ and $d_n \leqslant d$.

Hence, all Gram matrices $G_n$ of $\P_n$ have signature $(d'_n,1,n-d'_n-1)$ with $d'_n  = d_n - 1 \geqslant d-1$ for all $n \to \infty$. Let us now consider the unit normal vectors defining the facets of $\P$. If they span the entire Minkowski space $\R^{d,1}$ then one could choose among them $d+1$ linearly independent vectors $r_{n_1}, \ldots, r_{n_s}$ with $s = d+1$ and indices $m \leqslant n_1 < \ldots n_s$. However, this would imply that $G_{n_s}$ has signature $(d,1,n_s-d-1)$ while its positive inertia index is less than $d$, as was discussed above. Thus, there is at least one hyperplane orthogonal to all of the walls of $\P$, but this contradicts Zariski density of $W$ by Proposition~\ref{prop:inf-index-then-inf-sided}. This proves our claim that it is impossible that only finitely many $\P_n$s are non--degenerate.

To conclude the proof of the lemma, we notice that if $\P_n$ is non--degenerate for some $n$ then obviously $\P_{n+1}$ also does not admit a hyperplane orthogonal to all of its walls, since otherwise it would hold for $\P_n$ either (recall that $\P_{n+1}$ is obtained from $\P_n$ just by adding one more wall). 
\end{proof}

We thus demonstrated that for some $m \geqslant 2d-1$, all $W_n$ with $n \geqslant m$ are Zariski dense. To complete the geometric proof of Theorem~\ref{th:main} we recall that by \cite[Theorem 1.2]{felikson-tumarkin} a finite index inclusion $W_m < W_n$, $m<n$, would compel the number of reflection generators of $W_m$ to be at least $n$, a contradiction.

\section{Proof of Theorem~\ref{th:thin-2}}\label{sec:proofmain2}

Let $\Lambda$ be a thin reflection subgroup contained in an arithmetic hyperbolic lattice $\Gamma < \mathbf{G}(\mathcal{O}_k)$. Then the lemma of Vinberg \cite[Lemma 7]{Vin67} which is, in the spirit of our paper, devoted to Zariski dense discrete groups containing reflections, implies that $\Gamma$ is a finite--index subgroup of $\mathbf{PO}(L)_{\mathcal{O}_k}$, for some Lorentzian lattice $L$ defined over a number field $k$ with the ring of integers $\mathcal{O}_k$. 

Then $\Lambda$ is a subgroup of $\mathscr{R}(L)$ and, moreover, $\Lambda$ is generated by a finite number of reflections in $\mathscr{R}(L)$. This implies that there exists a natural number $m \geq 3$ such that $\Lambda < \mathscr{R}_m(L)$. Notice that the mirrors of generating reflections of $\Lambda$ may belong to different chambers in the reflective tiling of $\HH^d$ under the $\mathscr{R}(L)$--action. Nevertheless, they are all conjugate in $\mathscr{R}(L)$ to a finite number of simple reflections bounding the fundamental Coxeter polyhedron $\P$ for the group $\mathscr{R}(L)$. Actually, we only need a finite number of simple reflections to write down the conjugating elements. Moreover, $\Lambda$ itself is a Coxeter group (for geometric reflection groups, see \cite[Part II, Ch. 5, Prop. 1.4]{AVS93}; for abstract Coxeter groups, see \cite[Theorem 3.3]{D}). Since $\Lambda$ is non--elementary, we have that $m \geqslant d+1$. 

\section{Examples}\label{sec:examples}

\subsection{Non--reflective lattice with roots}\label{non-ref1}

Let $L$ be the $\mathbb{Q}$--defined Lorentzian lattice associated to the form $f(x) = (x, x) = 3 x^2_0 + 14 x_0 x_1 + 98 x_0 x_2 + 49 x^2_2$, where $x = (x_0, x_1, x_2)$. Then $L$ is non--reflective. Indeed, the first $16$ roots of this form found by applying the Vinberg algorithm \cite{Bottinelli} are
\begin{align*}
     v_1 = (0, 7, -1), \, v_2 = (-7, -11, 2), \, v_3 = (0, 0, 1), \, v_4 = (-42, -24, 5), \\
     v_5 = (-98, 14, 1), \, v_6 = (-140, -31, 9), \, v_7 = (-168, -12, 7), \\ v_8 = (-21, -61, 14), \,
     v_9 = (-42, -94, 19), \, v_{10} = (-329, 22, 7),\\ v_{11} = (-42, -108, 23), \, v_{12} = (-252, -74, 19), \,
     v_{13} = (-273, -37, 14), \\ v_{14} = (-28, -86, 21), \, v_{15} = (-56, -151, 33), \, v_{16} = (-49, -154, 39).
\end{align*}

The involution $v_5 \to v_{10},\, v_{14} \to v_{16}$ turns out to be an infinite--order symmetry \cite[\S 7.3.1]{Bottinelli} of the reflective part of $L$.  

The Coxeter scheme of the root system above quickly becomes complicated (it becomes non--planar for more than $4$ roots), so we only draw it for the roots $v_1, \ldots, v_4$ (Figure~\ref{fig:thin_scheme1}) which already gives a sequence of nested thin subgroups in $\Gamma = \mathbf{PO}(L)_{\mathbb{Z}}$. Below we also list the first $4$ reflections $r_i$, $i=1,2,3,4$, each corresponding to its root $v_i$, $i=1,2,3,4$.

\begin{equation*}\label{gens:1}
r_1 = \left(\begin{array}{rrr}
1 & 0 & 0 \\
0 & 1 & 14 \\
0 & 0 & -1
\end{array}\right), \quad
r_2 = \left(\begin{array}{rrr}
1 & -14 & -70 \\
0 & -21 & -110 \\
0 & 4 & 21
\end{array}\right), 
\end{equation*}
\begin{equation*}\label{gens:2}
r_3 = \left(\begin{array}{rrr}
1 & 0 & 0 \\
0 & 1 & 0 \\
-2 & 0 & -1
\end{array}\right), \quad
r_4 = \left(\begin{array}{rrr}
-83 & -504 & -3108 \\
-48 & -287 & -1776 \\
10 & 60 & 371
\end{array}\right).
\end{equation*}

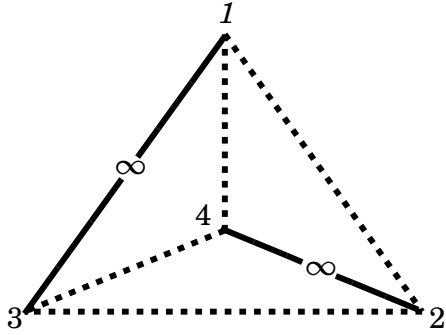
\begin{figure}
\centering
\scalebox{1.00}{
\tikzset{every picture/.style={line width=0.75pt}} 

\begin{tikzpicture}[x=0.75pt,y=0.75pt,yscale=-1,xscale=1]

\draw [line width=2.25]  [dash pattern={on 2.53pt off 3.02pt}]  (300.5,101) -- (400,241) ;
\draw [line width=2.25]    (300.5,102) -- (283.67,125.33) ;
\draw [line width=2.25]    (283.67,125.33) -- (266.83,148.67) ;
\draw [line width=2.25]    (266.83,148.67) -- (256.5,163) ;
\draw [line width=2.25]    (248.5,174) -- (233.17,195.33) ;
\draw [line width=2.25]    (233.17,195.33) -- (216.33,218.67) ;
\draw [line width=2.25]    (216.33,218.67) -- (199.5,242) ;
\draw [line width=2.25]  [dash pattern={on 2.53pt off 3.02pt}]  (400,241) -- (199.5,241) ;
\draw [line width=2.25]  [dash pattern={on 2.53pt off 3.02pt}]  (300.5,101) -- (300.5,200) ;
\draw [line width=2.25]  [dash pattern={on 2.53pt off 3.02pt}]  (300.5,200) -- (199.5,241) ;
\draw [line width=2.25]    (298.5,199) -- (315.08,205.83) ;
\draw [line width=2.25]    (315.08,205.83) -- (331.67,212.67) ;
\draw [line width=2.25]    (331.67,212.67) -- (339.5,216) ;
\draw [line width=2.25]    (356.5,223) -- (364.83,226.33) ;
\draw [line width=2.25]    (364.83,226.33) -- (381.42,233.17) ;
\draw [line width=2.25]    (381.42,233.17) -- (398,240) ;

\draw (296,83.4) node [anchor=north west][inner sep=0.75pt]    {$\mathit{1}$};
\draw (404,237.4) node [anchor=north west][inner sep=0.75pt]    {$2$};
\draw (187,237.4) node [anchor=north west][inner sep=0.75pt]    {$3$};
\draw (284,185.4) node [anchor=north west][inner sep=0.75pt]    {$4$};
\draw (244,163.4) node [anchor=north west][inner sep=0.75pt]    {$\infty $};
\draw (340,214.4) node [anchor=north west][inner sep=0.75pt]    {$\infty $};

  \fill[black] (300.5,102) circle (3pt) ;
  \fill[black] (400,241) circle (3pt) ; 
  \fill[black] (199.5,241) circle (3pt) ;
  \fill[black] (300.5,200) circle (3pt) ;

\end{tikzpicture}
}
\caption{The Coxeter scheme for $v_1, \ldots, v_4$ (Section \ref{non-ref1})}\label{fig:thin_scheme1}
\end{figure}

\subsection{A lattice without roots}\label{non-ref2}

It is important to ask that the lattice $L$ in Theorem~\ref{th:thin} have roots. A curious example of the contrary is the $\mathbb{Q}$--defined Lorentzian lattice $N$ associated to the form $f(x) = (x,x) =  49 x_1^2 + 98 x_0 x_2 + 14 x_1 x_2 + 3 x_2^2$. It turns out that $N$ has no roots. This instance was communicated to the authors by Ga\"el Collinet \cite{Collinet}.  

In order to show it, let us recall that by a result of Vinberg \cite{Vin84} once we have a root $r$ then $f(r)$ divides twice the last invariant factor of the Gram matrix of $f$. Thus we need first to decide which divisors of $4802$ can be integrally represented by $f$. Modulo local obstructions only $49$, $98$, $2401$, and $4802$ remain (and all of them are indeed integrally represented by $f$). 

Let us consider the case of a root $r = (k_1, k_2, k_3) \in \mathbb{Z}^3$ with $f(r) = 49$. The crystallographic conditions read as $2 (r, e_i) \in f(r) \cdot \mathbb{Z}$, $i=1,2,3$, where $e_i$ is the $i$--th basis vector spanning $N$. Moreover, $k_i$'s are assumed to be coprime. It follows from the crystallographic conditions that we have $k_1 = m_1$, $k_2 = 7 m_2 - 3 m_3$, and $k_3 = 7 m_3$, for $m_1, m_2, m_3 \in \mathbb{Z}$. 

Hence $q(r) = f(r)/49 = 49 m_2^2 + 14 m_1 m_3 - 28 m_2 m_3 + 6 m_3^2$. However, $q$ does not integrally represent $1$. Analogous arguments exclude the remaining possible roots lengths $98$, $2401$, and $4802$.

\bibliographystyle{siam}
\bibliography{biblio.bib}

\end{document}